\newtheorem{theorem}{Theorem}
\newtheorem{lemma}{Lemma}
\newtheorem{definition}{Definition}
\title{An algorithmic proof of Bachet's conjecture and the Lagrange-Euler method}
\author{Felix Sidokhine}
\begin{document}
\maketitle

\section{Introduction}

The goal of this notice is to present a proof of Bachet's conjecture based exclusively on the fundamental theorem of arithmetic. The novelty of this proof consists in its introduction of a partial order on rational integers through the unique factorization property. In general, the proofs of Bachet's conjecture by Lagrange - Euler's method (c.f. \cite{Landau1958}, \cite{Davenport1983}, \cite{Cohen2007}) assume necessary the use of infinite descent. In the proposed proof we do not assume the existence of a ``minimal solution", but rather we show the existence of the desired solution through an algorithmic method.
\\
\\
This approach should also be suitable for generalized versions of Bachet's conjecture for algebraic integers. This is due to the fact that total orders are often impossible to introduce in algerbaic extensions of $\mathbb{Q}$. However, if unique factorization is used as the basis for ordering, it is likely to be possible to apply our approach and obtain the desired results. For example, \cite{Deutsch2002} possibly had to restrict her work to totally ordered fields due to the problem of ordering.

\section{Definitions}

\begin{definition}[Initial interval of primes] 
Let $\Pi_n = \{ p_1=2,p_2=3,...,p_n \}$ be the interval of the first $n$ prime numbers.
\end{definition}

\noindent Let $S(\Pi_n) = \{ w = p_1^{i_1}...p_n^{i_n}| i_1,i_2,..,i_n \in \mathbb{Z}^+ \}$. Given an element $w \in S(\Pi_n)$, $w$ must be written as $w= p_1^{\alpha_1}...p_n^{\alpha_n}$, even if some of the powers are 0. The leading prime factor of $w$, is the prime with the greatest index, whose corresponding power is not 0.

\begin{definition}[The $L$ map]
Let $w = p_1^{\alpha_1}p_2^{\alpha_2}...p_n^{\alpha_n}$ where $\alpha_i \in \mathbb{Z}^+$. Let $L :S  \rightarrow \mathbb{Z}^+$,  $L(w) = k$, where $k$ is the index of the leading prime.
\end{definition}

\begin{definition}[The $\nu$ map]
Let $w = p_1^{\alpha_1}p_2^{\alpha_2}...p_n^{\alpha_n}$ where $\alpha_i \in \mathbb{Z}^+$. Let $\nu : S \rightarrow \mathbb{Z}^+$ where $\nu(w) = \alpha_{L(w)}$. 
\end{definition}

\noindent The above mappings are well-defined due to the unique factorization property of $\mathbb{Z}$.

\begin{definition}[The partial order on $S$]
Given $w_1$ and $w_2$, $w_1 \prec w_2$ if $L(w_1) < L(w_2)$ or $L(w_1) = L(w_2)$ and $\nu(w_1) < \nu(w_2)$.
\end{definition}

\noindent We shall now give some properties of this partial order on the set $S$. Let $w_1 = w_2 w_3$, then $L(w_1) = \max(L(w_2),L(w_3))$; and moreover if $L(w_2) < L(w_3)$, then $\nu(w_1) = \nu(w_3)$, otherwise if $L(w_2) = L(w_3)$ then $\nu(w_1) = \nu(w_2) + \nu(w_3)$. 

\begin{definition}[Reduced Solution]
Given a system of equations
\begin{equation*}
\begin{cases}
x_1^2 + x_2^2 + x_3^2 + x_4^2 - px_5 = 0 \\
(x_1,x_2,...,x_5) = 1
\end{cases}
\end{equation*}

\noindent a solution $(a_1,a_2,a_3,a_4,a_5)$ is called a reduced solution if every prime factor of $a_5$ precedes $p$.
\end{definition}

\section{The Result}

\begin{lemma}
For any prime $p$ the system of equations:
\begin{equation}
\begin{cases}
x_1^2 + x_2^2 + x_3^2 + x_4^2 - px_5 = 0 \\
(x_1,x_2,...,x_5) = 1
\end{cases}
\end{equation}
\noindent has a reduced solution.
\end{lemma}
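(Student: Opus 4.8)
The plan is to obtain the lemma from the ``pigeonhole'' half of the Lagrange--Euler method and then to observe that the multiplier $x_5$ it produces is automatically small enough for the solution to be reduced, so that the partial order on $S$ is not yet needed here (it will do its real work in the descent used to prove the theorem). First I would dispose of $p=2$: the tuple $(1,1,0,0,1)$ solves the system, and $x_5=1$ has no prime factors, so the reducedness condition holds vacuously.

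For odd $p$, I would count residues. Consider
\[
A=\{\,a^2 \bmod p : 0\le a\le \tfrac{p-1}{2}\,\},\qquad
B=\{\,-1-b^2 \bmod p : 0\le b\le \tfrac{p-1}{2}\,\}.
\]
Since $t\mapsto t^2$ is injective on $\{0,1,\dots,(p-1)/2\}$ modulo $p$ (a consequence of $p$ being an odd prime), each of $A$ and $B$ has exactly $(p+1)/2$ elements, so $|A|+|B|=p+1>p$ forces a common value: there are $a,b$ in the range with $a^2+b^2+1\equiv 0\pmod p$. Put $a_1=a$, $a_2=b$, $a_3=1$, $a_4=0$, and $a_5=(a^2+b^2+1)/p\in\mathbb{Z}^+$. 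The first equation of the system holds by construction; the gcd condition holds because $a_3=1$; and from $0\le a,b\le (p-1)/2$ we get $0<a^2+b^2+1\le 2\big(\tfrac{p-1}{2}\big)^2+1<p^2$, hence $0<a_5<p$.

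It then remains only to check reducedness: since $0<a_5<p$, every prime factor of $a_5$ is at most $a_5<p$, hence precedes $p$, so $(a_1,a_2,a_3,a_4,a_5)$ is a reduced solution. The only slightly delicate points are the cardinality count for $A$ and $B$ and the numeric bound $a^2+b^2+1<p^2$ that pins $a_5$ strictly below $p$; I expect the cardinality/pigeonhole step to be the main obstacle, although it is entirely routine, and everything else is immediate.
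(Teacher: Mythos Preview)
Your argument is correct. The pigeonhole count, the bound $a^2+b^2+1\le 2\bigl(\tfrac{p-1}{2}\bigr)^2+1<p^2$, and the observation that $a_3=1$ forces the gcd condition are all sound, and together they give $0<a_5<p$, hence every prime factor of $a_5$ precedes $p$.

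The paper's proof of this lemma takes a somewhat different route. It appeals to Chevalley's theorem to obtain a nonzero solution of the system, then replaces the first four coordinates by their least absolute residues modulo $p$ to force the new $x_5$ below $p$, and finally divides out the gcd of the first four coordinates to obtain coprimality. Your approach is more elementary and more explicit: the direct pigeonhole on $\{a^2\}$ and $\{-1-b^2\}$ replaces the Chevalley existence step, and by arranging $a_3=1$ from the start you avoid the separate gcd reduction altogether. What the paper's version buys is brevity and a uniform shape (the same ``take least residues, divide by gcd'' maneuver reappears in Lemma~2); what your version buys is that nothing beyond elementary counting is invoked, and the solution is given by an explicit formula.
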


\begin{lemma}
Let $(a_1,a_2,a_3,a_4,a_5)$ be a reduced solution of equation (1) and $p' | a_5$. Then the system of equations:
\begin{equation}
\begin{cases}
y_1^2 + y_2^2 + y_3^2 + y_4^2 - p'y_5 = 0 \\
(y_1,y_2,...,y_5) =1
\end{cases}
\end{equation}

\noindent has a reduced solution $(b_1,b_2,b_3,b_4,b_5)$ such that:
\begin{equation}
\begin{aligned}
a_1b_1 + a_2b_2 + a_3b_3 + a_4b_4 \equiv 0 \mod p' \\
a_1b_2 - a_2b_1 + a_3b_4 - a_4b_3 \equiv 0 \mod p' \\
a_1b_3 - a_3b_1 + a_4b_2 - a_2b_4 \equiv 0 \mod p' \\
a_1b_4 - a_4b_1 + a_2b_3 - a_3b_2 \equiv 0 \mod p'
\end{aligned}
\end{equation}
\end{lemma}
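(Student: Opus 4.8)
The plan is to run the classical quaternion (Euler) descent, but with the prime $p'$ itself playing the role usually taken by an auxiliary modulus, so that the four congruences (3) come out for free. First I would choose integers $b_1,b_2,b_3,b_4$ with $b_i\equiv a_i\pmod{p'}$ and $|b_i|\le p'/2$, and set $b_5:=(b_1^2+b_2^2+b_3^2+b_4^2)/p'$; this is a nonnegative integer since $b_1^2+\cdots+b_4^2\equiv a_1^2+\cdots+a_4^2=pa_5\equiv 0\pmod{p'}$, so $(b_1,\dots,b_5)$ already solves the equation of (2).

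The congruences (3) are then automatic. Reducing each left-hand side modulo $p'$ and using $b_i\equiv a_i$ turns the first one into $a_1^2+a_2^2+a_3^2+a_4^2=pa_5\equiv 0$ (this is the only place $p'\mid a_5$ is needed), and turns the other three into the identically vanishing expressions $a_1a_2-a_2a_1+a_3a_4-a_4a_3$, $a_1a_3-a_3a_1+a_4a_2-a_2a_4$, $a_1a_4-a_4a_1+a_2a_3-a_3a_2$.

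Next I would check that $(b_1,\dots,b_5)$ can be taken reduced and primitive. It is not the zero solution: if all $b_i=0$ then $p'\mid a_i$ for $i\le4$, and together with $p'\mid a_5$ this contradicts $(a_1,\dots,a_5)=1$, so $b_5\ge1$. For $p'$ odd, $|b_i|\le(p'-1)/2$ gives $p'b_5=\sum b_i^2\le(p'-1)^2<p'^2$, hence $1\le b_5\le p'-1$; every prime factor of $b_5$ is then smaller than $p'$, that is, precedes $p'$, so the solution is reduced. The prime $p'=2$ is the one exceptional case: if some $a_i$ is even then $\sum b_i^2\le3$ forces $b_5=1$; and if all four $a_i$ are odd I would instead take $(b_1,b_2,b_3,b_4,b_5)=(1,1,0,0,1)$, for which the system (3) reduces modulo $2$ to the true statements $a_1\equiv a_2$ and $a_3\equiv a_4\pmod 2$.

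Finally, to force primitivity, set $g=(b_1,\dots,b_5)$. Then $p'\nmid g$ (once more because $p'\nmid(a_1,\dots,a_4)$), so $\gcd(g^2,p')=1$; from $g^2\mid\sum_{i\le4}b_i^2=p'b_5$ we get $g^2\mid b_5$, and replacing $(b_1,\dots,b_4,b_5)$ by $(b_1/g,\dots,b_4/g,b_5/g^2)$ again solves (2), has a fifth entry that has only shrunk (hence is still reduced), is readily checked to be primitive, and still satisfies (3), since each congruence may be divided through by $g$ — legitimate because $g$ divides each left-hand side and $\gcd(g,p')=1$. I expect the difficulty here to be one of bookkeeping rather than depth: the size estimate $b_5<p'$ underpinning reducedness has to be tracked exactly — it genuinely fails at $p'=2$ with all $a_i$ odd, which is why that case is handled separately — and one must confirm that the primitivity normalization disturbs neither the congruences (3) nor reducedness.
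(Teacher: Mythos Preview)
Your argument is correct and follows the same route as the paper's sketch: take the least absolute residues $b_i\equiv a_i\pmod{p'}$, observe that the congruences (3) then hold identically, bound $b_5<p'$ to get reducedness, and divide out the common factor for primitivity. Your version is in fact more careful than the paper's, since you explicitly isolate and repair the edge case $p'=2$ with all $a_i$ odd (where the naive choice gives $b_5=2$), which the paper's sketch passes over in silence.
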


\begin{theorem}
Let $p$ be an arbitrary prime, then $x_1^2 + x_2^2 + x_3^2 + x_4^2 = p$ is solvable.
\end{theorem}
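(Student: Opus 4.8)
\section*{Proof proposal}

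The plan is to run an algorithmic descent on the multiplier $x_5$, using the partial order $\prec$ on $S$ in place of the usual minimality/infinite descent argument. First, by Lemma 1, fix a reduced solution $(a_1,a_2,a_3,a_4,a_5)$ of equation (1). I will then prove, by well-founded induction with respect to $\prec$, the following statement: \emph{whenever $(d_1,d_2,d_3,d_4,m)$ is a reduced solution of equation (1), the equation $x_1^2+x_2^2+x_3^2+x_4^2=p$ is solvable.} This is a legitimate induction because $\prec$ is well-founded on $S$: the leading index $L$ takes values in $\{1,\dots,n\}$ (or is minimal for $w=1$), and for each fixed leading index the values of $\nu$ are positive integers, so every nonempty subset of $S$ has a $\prec$-minimal element. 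Granting the statement, the theorem follows by applying it to $(a_1,\dots,a_5)$.

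For the base case $m=1$ we already have $d_1^2+d_2^2+d_3^2+d_4^2=p$, so there is nothing to prove. For the inductive step, assume $m>1$ and let $p'$ be the \emph{leading} prime factor of $m$; then $p'\mid m$, and since the solution is reduced, $p'$ precedes $p$ (i.e.\ $p'<p$). Apply Lemma 2 to $(d_1,d_2,d_3,d_4,m)$ and $p'$ to obtain a reduced solution $(b_1,b_2,b_3,b_4,b_5)$ of equation (2) satisfying the congruences (3). Now invoke Euler's four-square identity
\[
(d_1^2+d_2^2+d_3^2+d_4^2)(b_1^2+b_2^2+b_3^2+b_4^2)=c_1^2+c_2^2+c_3^2+c_4^2,
\]
where $c_1,c_2,c_3,c_4$ are precisely the four bilinear forms on the left-hand sides of (3) with $a_i$ replaced by $d_i$. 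By (3) we have $p'\mid c_i$ for each $i$; writing $c_i=p'f_i$ and cancelling $p'^2$ from $(pm)(p'b_5)$ yields $f_1^2+f_2^2+f_3^2+f_4^2=p\,(m b_5/p')$.

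Set $m'=m b_5/p'$, an integer since $p'\mid m$. The crucial point — and the step I expect to be the main obstacle — is that $m'\prec m$. This is exactly where it matters that $p'$ is the \emph{leading} prime of $m$ and that the Lemma 2 solution is reduced, so that every prime factor of $b_5$ precedes $p'$: using the stated behaviour of $L$ and $\nu$ under products, removing one factor $p'$ from $m$ and then multiplying by $b_5$ either strictly lowers $L$ (when $p'$ occurs in $m$ to the first power, so $m/p'$ and $b_5$ both have leading index below $L(m)$) or keeps $L$ fixed and strictly lowers $\nu$ (when $p'$ occurs to a higher power); in either case $m'\prec m$. It remains to restore the normalisation: put $g=\gcd(f_1,f_2,f_3,f_4)$. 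Since every prime factor of $m'$ precedes $p$ we have $p\nmid m'$, hence $p\nmid g$ and therefore $g^2\mid m'$; dividing through gives $(f_1/g)^2+\dots+(f_4/g)^2=p\,(m'/g^2)$ with $\gcd(f_1/g,\dots,f_4/g)=1$, so $\gcd(f_1/g,\dots,f_4/g,m'/g^2)=1$ automatically, every prime factor of $m'/g^2$ still precedes $p$, and $m'/g^2\preceq m'\prec m$ because $w_1\mid w_2$ implies $w_1\preceq w_2$. Thus $(f_1/g,\dots,f_4/g,\,m'/g^2)$ is a reduced solution of equation (1) whose multiplier is $\prec m$, and the inductive hypothesis completes the step. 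The routine ingredients are the verification of Euler's identity and the elementary gcd bookkeeping; the substantive content is the strict $\prec$-decrease of the multiplier, which replaces the classical appeal to infinite descent.
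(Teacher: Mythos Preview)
Your proof is correct and follows essentially the same route as the paper: take the leading prime $p'$ of the multiplier, apply Lemma~2 and Euler's identity, cancel and use the $L/\nu$ case analysis to show the new multiplier is $\prec$-smaller, then iterate. The only cosmetic differences are that you split the cancellation into two stages (first by $p'$, then by $g=\gcd(f_i)$) and frame the descent as well-founded induction, whereas the paper divides by $d=\gcd(c_1,\dots,c_4)$ in one step and exhibits the resulting strictly $\prec$-decreasing chain explicitly; since $d=p'g$, the two reductions produce the same new multiplier.
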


\begin{proof}
Let $(a_1,a_2,a_3,a_4,a_5)$ be a reduced solution of equation (1) and $a_5 \in S$. Let $p' = p_n$ and $(b_1,b_2,b_3,b_4,b_5)$ be a reduced solution of equation (2) subject to (3). By taking the product of equations (1) and (2), we obtain:
\begin{equation}
(a_1^2 + a_2^2 + a_3^2 + a_4^2)(b_1^2 + b_2^2 + b_3^2 + b_4^2) - p p_n a_5b_5 = 0
\end{equation}
This equation by Euler's identity is really:
\begin{equation}
c_1^2 + c_2^2 + c_3^2 + c_4^2 - pp_na_5b_5 = 0
\end{equation}

\noindent Where by lemma 2, $\gcd(c_1,c_2,c_3,c_4) =d $ where $d \equiv 0 \mod p_n$. We can therefore reduce by $d$ and obtain:
\begin{equation}
\begin{cases}
{a_1^{1}}^2 + {a_2^{1}}^2  + {a_3^{1}}^2  + {a_4^{1}}^2  - pa_5^{1} = 0 \\
(a_1^1,a_2^1,a_3^1,a_4^1,a_5^1) = 1
\end{cases}
\end{equation}

\noindent where $a_{i}^1 = \frac{c_i}{d}$ where $i=1...4$ and $a_5^1 = \frac{p_na_5b_5}{d^2}$. Let us show that $a_5 \succ a_5^1$. $L(a_5b_5) = L(a_5)$ follows from the fact that all prime of divisors of $b_5$ preceed $p_n$. By multiplying $a_5b_5$ by $p_n$, $L(p_na_5b_5) = L(a_5)$. When one divides by $d^2$, there are two possibilities:
\begin{itemize}
\item Possibility 1: $L(\frac{p_na_5b_5}{d^2}) < L(a_5)$, then $ a_5^1 \prec a_5 $. \\
\item Possibility 2: $L(\frac{p_na_5b_5}{d^2}) = L(a_5)$, however in this case, $\nu(\frac{p_na_5b_5}{d^2}) < \nu(a_5)$, then $a_5^1 \prec a_5$.
\end{itemize}

\noindent If $L(a_5) > L(a_5^1)$, then the leading prime factor of $a_5^1$ strictly precedes $p_n$. Otherwise if $L(a_5) = L(a_5^1)$ (implying  $\nu(a_5) > \nu(a_5^1)$), we repeat this procedure for $p_n$. 
\newline
\newline
To finalize the proof, one should note that this reduction procedure can be repeated. Moreover, the maximal bound before $L$ turns into a strict inequality towards its predecessor is equal to the power of the prime we are reducing over. Therefore, we have the following ordered chain and its associated finite non-increasing sequence:
\begin{eqnarray}
a_5 \succ a_5^1 \succ ... \succ a_5^{k-1} \succ a_5^k \succ  ... \succ a_5^t = 1 \\
L(a_5) \geq L(a_5^1) \geq ... \geq L(a_5^{k-1})> L(a_5^k)\geq  ... > L(a_5^t) = 0
\end{eqnarray}

\noindent which completes the proof.

\end{proof}

\begin{theorem}[Lagrange's four square theorem]
By the unique factorization property of $\mathbb{Z}$, theorem 1, and Euler's identity, Lagrange's theorem follows.
\end{theorem}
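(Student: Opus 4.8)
The plan is to deduce the general statement — that every non-negative integer is a sum of four squares — from the prime case (Theorem 1) by a straightforward multiplicative induction. First I would dispose of the trivial cases: $0 = 0^2+0^2+0^2+0^2$ and $1 = 1^2+0^2+0^2+0^2$ are each sums of four squares. For the remaining case, let $n \geq 2$; by the unique factorization property of $\mathbb{Z}$ write $n = q_1 q_2 \cdots q_k$ as a product of primes, not necessarily distinct, with $k \geq 1$.

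Next I would run an induction on $k$, the number of prime factors counted with multiplicity. The base case $k=1$ is exactly Theorem 1: the prime $q_1$ is a sum of four squares. For the inductive step, suppose $m = q_1 \cdots q_{k-1}$ is a sum of four squares, say $m = a_1^2 + a_2^2 + a_3^2 + a_4^2$, and let $q_k = b_1^2 + b_2^2 + b_3^2 + b_4^2$ by Theorem 1. Then Euler's four-square identity expresses the product $m \cdot q_k = n$ as
\[
(a_1^2 + a_2^2 + a_3^2 + a_4^2)(b_1^2 + b_2^2 + b_3^2 + b_4^2) = c_1^2 + c_2^2 + c_3^2 + c_4^2,
\]
where $c_1,\dots,c_4$ are the bilinear forms in the $a_j,b_j$ appearing in that identity — the same forms already used (modulo $p'$) in Lemma 2 and in equation (4)--(5). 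Hence $n$ is a sum of four squares, and the induction closes.

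There is no obstacle of substance here: the content lies entirely in Theorem 1 and in the algebraic identity of Euler. The only points deserving care are (i) phrasing the induction on the number of prime factors \emph{with multiplicity}, so repeated primes cause no difficulty, and (ii) recording Euler's identity itself, either by citation or by writing out $c_1,\dots,c_4$ explicitly and verifying the identity by direct expansion. I would include the explicit identity for completeness, since it is the one ingredient not previously established in the paper, after which the theorem follows in one line from unique factorization together with the induction above.
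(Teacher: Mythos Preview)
Your proposal is correct and follows exactly the route the paper indicates: the paper gives no separate proof for this theorem, since the statement itself is the sketch --- combine unique factorization, Theorem~1 for each prime factor, and Euler's identity to multiply up. Your write-up simply makes that one-line sketch explicit via induction on the number of prime factors (with the trivial cases $0$ and $1$ handled separately), which is precisely what is intended.
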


\section{Remarks (Sketch of proofs of lemma 1 and lemma 2)}

Proof of Lemma 1: Using Chevalley's theorem, the equation $x_1^2 + x_2^2 + x_3^2 + x_4^2 - px_5 = 0$ has a non-zero solution $(a_1,a_2,a_3,a_4,a_5)$. It is possible find a solution $(b_1,b_2,b_3,b_4,b_5)$, such that every $b_i$,$(i=1...4)$ is a least absolute residue modulo $p$, consequently all prime of factors of $b_5$ preceed $p$. Let  $\gcd(b_1,b_2,b_3,b_4)=d$. If $d=1$, then we have a reduced solution. Otherwise, by dividing by $d$ we obtain the reduced solution.
\newline
\newline
\noindent Proof of Lemma 2: One can assume per the lemma's formulation that we are given $(a_1,a_2,a_3,a_4,a_5)$ which are a reduced solution of equation (1). Find the least absolute residues $(c_1,c_2,c_3,c_4)$ corresponding to $(a_1,a_2,a_3,a_4)$ modulo $p'$ (where $p'$ is an abitrary prime divisor of $a_5$). Using the properties of residues and their arithmetic, one can manipulate the $a$'s and $c$'s to obtain solutions over $p'$ without the relative-primality condition. This last condition is satisfied independently from $p'$ arithmetic, by dividing the final result by $\gcd(c_1,c_2,c_3,c_4)$, which results in $(b_1,b_2,b_3,b_4,b_5)$.
\newline
\newline

\bibliographystyle{ieeetr}
\bibliography{references}

\end{document}